\documentclass{amsart}
\usepackage{amsrefs}
\usepackage[T1]{fontenc}
\usepackage{graphicx} 
\usepackage{amsfonts, amsmath, amssymb}
\usepackage[english]{babel}
\usepackage{framed}
\usepackage{array}
\numberwithin{equation}{section}
\newtheorem{theorem}{Theorem}
\newtheorem{lemma}{Lemma}
\newtheorem{conj}{Conjecture}

\newcommand{\N}{\mathbb N}

\newcommand{\C}{\mathbb C}

\def\legendre#1#2{\genfrac(){0.2pt}0{#1}{#2}}
\newcommand{\minilegendre}[2]{\genfrac{(}{)}{}{}{#1}{#2}}
\title{Series for $1/\pi$ arising from Cauchy product}
\author{Roman Le Lan}
\email{roman.lelan1@gmail.com}
\begin{document}
\begin{abstract}
   In this note, we evaluate a series for $1/\pi$ conjectured by Sun. Our proof uses the Cauchy product and hypergeometric transformations. From this result, we derive two additional analogous series involving polynomials of degree $3$. Further identities can be proved using our method; these are presented in a table at the end of the note.
\end{abstract}
\maketitle

\section{Introduction}
Ramanujan--Sato type series for $1/\pi$ are widely studied. Sun conjectured many evaluations of such series. In particular, he \cite{cong-pi-sun}*{Conjecture 4} proposed 37 conjectures concerning series of the form
\[
\sum_{n=0}^\infty (an+b)q^n\sum_{k=0}^n\binom{-x}{k}\binom{x-1}{n-k}\binom{-y}{k}\binom{y-1}{n-k}=\frac{\alpha}{\pi},
\]
where $a$ and $b$ are nonzero integers, $q$, $x$, and $y$ are nonzero rational numbers, and $\alpha$ is an algebraic number of degree at most $2$. Almkvist and Aycock \cite{aa} evaluated 36 of these formulas. In this note, we prove the last identity from Sun's list \cite{cong-pi-sun}*{(4.14)} (see also \cite{sunconjres}).
\begin{theorem}
We have the following identity.
    \begin{equation}
    \sum_{n=0}^\infty \frac{3n-1}{2^n}\sum_{k=0}^n\binom{-1/3}{k}\binom{-2/3}{n-k}\binom{-1/6}{k}\binom{-5/6}{n-k}=\frac{3\sqrt6}{2\pi}
    \label{th1}
    \end{equation}
    \label{1}
\end{theorem}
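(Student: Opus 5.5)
The inner sum is a Cauchy-product coefficient, so I will write the generating function as a product of two ${}_2F_1$'s. Since
\[
\binom{-x}{k}\binom{-y}{k}=\frac{(x)_k(y)_k}{k!^2},\qquad \binom{x-1}{m}\binom{y-1}{m}=\frac{(1-x)_m(1-y)_m}{m!^2},
\]
with $x=1/3$, $y=1/6$ we get
\[
\sum_{n\ge0}z^n\sum_{k=0}^n\binom{-1/3}{k}\binom{-2/3}{n-k}\binom{-1/6}{k}\binom{-5/6}{n-k}
={}_2F_1\!\left(\tfrac13,\tfrac16;1;z\right)\,{}_2F_1\!\left(\tfrac23,\tfrac56;1;z\right)=:G(z).
\]
The left side of \eqref{th1} is then $\bigl(3\theta G-G\bigr)(1/2)$ with $\theta=z\,d/dz$. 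Abel summation and absolute convergence at $z=1/2$ are fine, since the coefficients grow at most like $n^{O(1)}$ (radius of convergence $1$).

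Next I simplify the product. Euler's transformation gives ${}_2F_1(\tfrac23,\tfrac56;1;z)=(1-z)^{-1/2}\,{}_2F_1(\tfrac13,\tfrac16;1;z)$, so
\[
G(z)=(1-z)^{-1/2}F(z)^2,\qquad F(z)={}_2F_1\!\left(\tfrac16,\tfrac13;1;z\right).
\]
The problem is thus reduced to evaluating $F(1/2)$ and $F'(1/2)$, or more precisely the specific combination $3\theta G-G$ at $z=1/2$, which is quadratic in $F,F'$. Using $\theta G=G\bigl(\tfrac{z}{2(1-z)}+2zF'/F\bigr)$, at $z=1/2$ the left side becomes $\sqrt2\,\bigl(\tfrac12F^2+3FF'\bigr)$.

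To evaluate this, I apply a quadratic transformation to bring $F$ to a classical level. With parameters $(a,b,c)=(1/6,1/3,1)$, where $b=2a$, the transformation
\[
{}_2F_1(a,2a;c;z)\;\longrightarrow\;{}_2F_1\!\left(\tfrac a2,\ldots\right)
\]
or the $z\mapsto 4z(1-z)$-type transformation applies. Alternatively, since $c=1$ and $a+b=1/2$, we can use ${}_2F_1(a,b;a+b+\tfrac12;4z(1-z))={}_2F_1(2a,2b;a+b+\tfrac12;z)$ read backwards: ${}_2F_1(\tfrac16,\tfrac13;1;z)={}_2F_1(\tfrac1{12},\tfrac16;1;\cdots)$. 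Both of these lead to the level-$3$ or level-$4$ theories, i.e.\ ${}_2F_1(\tfrac13,\tfrac23;1;\cdot)$ or ${}_2F_1(\tfrac14,\tfrac34;1;\cdot)$ at a singular value. A cleaner route is Goursat's cubic transformation relating ${}_2F_1(\tfrac13,\tfrac23;1;w)$ to ${}_2F_1(\tfrac16,\tfrac13;1;z)$; under it the point $z=1/2$ should correspond to a CM point. Then $F(1/2)$ is a period, and the combination $\tfrac12F^2+3FF'$ is, by Clausen/Ramanujan theory, $\tfrac{\text{const}}{\pi}$ via the Legendre-type relation at a singular modulus. The corresponding Ramanujan series at level $3$ is known (the Chan–Liaw–Tan series), so I can take its $(a_N,b_N)$ values.

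The main obstacle is the last step. We need to find the right transformation mapping $z=1/2$ to a known singular value and then transport the derivative: $F'$ picks up the derivative of the algebraic change of variable and of the multiplier. This must produce exactly the combination appearing in a known level-$3$ (or level-$4$) series for $1/\pi$, with the constant $3\sqrt6/2$. I would first try the identity
\[
{}_2F_1\!\left(\tfrac16,\tfrac13;1;z\right)=(1-z)^{-1/4}\,{}_2F_1\!\left(\tfrac14,\tfrac34;1;\phi(z)\right),
\]
compute $\phi(1/2)$, and match it against the tabulated Ramanujan–Sato series $\sum \frac{(1/4)_n(3/4)_n(1/2)_n}{n!^3}(An+B)X^n=\frac{C}{\pi}$. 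Via Clausen's formula $F^2$ is exactly such a ${}_3F_2$ series, so the derivative combination turns into $\sum(An+B)\,\cdot$ of it, and the final constant is checked numerically.
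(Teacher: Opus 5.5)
Your reduction is correct and matches the first half of the paper's proof. The Cauchy product gives $G={}_2F_1(\frac13,\frac16;1;z)\,{}_2F_1(\frac23,\frac56;1;z)$, Euler's transformation gives $G=(1-z)^{-1/2}F^2$, and the left-hand side equals $\sqrt2\bigl(\frac12F^2+3FF'\bigr)$ at $z=\frac12$.

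\textbf{The gap is the final evaluation.} That is where the content of the theorem lies, and you never compute this quantity. You list candidate transformations and assert that $z=\frac12$ ``should correspond to a CM point''. You propose ${}_2F_1(\frac16,\frac13;1;z)=(1-z)^{-1/4}\,{}_2F_1(\frac14,\frac34;1;\phi(z))$ without derivation, and leave the constant to a numerical check. Nothing in the proposal actually produces $\frac{3\sqrt6}{2\pi}$.

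Some of these suggestions also fail as stated:
\begin{itemize}
\item Reading ${}_2F_1(a,b;a+b+\frac12;4w(1-w))={}_2F_1(2a,2b;a+b+\frac12;w)$ backwards from $(\frac16,\frac13;1)$ would require $c=\frac1{12}+\frac16+\frac12=\frac34\neq1$. The valid direction is forwards with $(a,b)=(\frac16,\frac13)$, which gives $F(4w(1-w))={}_2F_1(\frac13,\frac23;1;w)$.
\item The level-$4$ identity cannot hold with $\phi$ rational. The equation for ${}_2F_1(\frac14,\frac34;1;\cdot)$ has exponent differences $0,0,\frac12$, so any pull-back of it has only exponent differences that are multiples of $\frac12$. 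But $F$ has exponent difference $\frac16$ at $z=\infty$, and its equation is irreducible.
\end{itemize}

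\textbf{The missing idea} is that no change of variable is needed, because $z=\frac12$ is already the right argument for the cubic theory. Since $c=a+b+\frac12$ for $(a,b)=(\frac16,\frac13)$, Clausen's formula applies to $F$ itself:
\[
F(z)^2={}_3F_2\Bigl(\tfrac12,\tfrac13,\tfrac23;1,1;z\Bigr)=\sum_{n\ge0}\binom{2n}{n}^2\binom{3n}{n}\Bigl(\frac{z}{108}\Bigr)^n .
\]
This is exactly the paper's Lemma~1, which it proves by Zeilberger's algorithm on the Cauchy-product coefficients of $F^2$. Writing $H=F^2$, so that $3FF'=\frac32H'$, your expression becomes
\[
\sqrt2\Bigl(\tfrac12H+\tfrac32H'\Bigr)\Big|_{z=1/2}=\frac{\sqrt2}{2}\sum_{k\ge0}(6k+1)\frac{(\frac12)_k(\frac13)_k(\frac23)_k}{k!^3\,2^k}.
\]
The known Ramanujan-type series $\sum_{k\ge0}(6k+1)\frac{(1/2)_k(1/3)_k(2/3)_k}{k!^3\,2^k}=\frac{3\sqrt3}{\pi}$ then gives $\frac{\sqrt2}{2}\cdot\frac{3\sqrt3}{\pi}=\frac{3\sqrt6}{2\pi}$. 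The paper takes this series from Guillera's table; it is a level-$3$ series of the kind you mention.

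Your sketch names both ingredients, Clausen's formula and a known level-$3$ series, but never connects them in this direct way. As written, the decisive step is missing.
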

It is natural to consider $p$-adic analogues of series for $1/\pi$. In particular, motivated by \eqref{th1}, Sun \cite{sunconjres} conjectured a supercongruence.
\begin{conj}[Z.-W. Sun]
    Let $p>3$ be a prime. Then,
    \[\sum_{n=0}^{p-1} \frac{3n-1}{2^n}\sum_{k=0}^n\binom{-1/3}{k}\binom{-2/3}{n-k}\binom{-1/6}{k}\binom{-5/6}{n-k}\equiv-p\legendre{-6}{p}\pmod{p^2}.\]
\end{conj}
Here, $\minilegendre{q}{p}$ denotes the Legendre symbol. We were unable to prove this congruence. 

Our second result relies on Theorem \ref{1}. We use the method given by Sun \cite{newtypeserie}.

\begin{theorem}
We have the following series.
\begin{equation}
\sum_{n=0}^\infty \frac{9n^3-27n^2-14n-2}{2^n}\sum_{k=0}^n\binom{-1/3}{k}\binom{-2/3}{n-k}\binom{-1/6}{k}\binom{-5/6}{n-k}=-\frac{3\sqrt6}{8\pi}
\label{th2}
\end{equation}
\begin{equation}
\sum_{n=0}^\infty \frac{18n^3-54n^2-25n-5}{2^n}\sum_{k=0}^n\binom{-1/3}{k}\binom{-2/3}{n-k}\binom{-1/6}{k}\binom{-5/6}{n-k}=\frac{3\sqrt6}{4\pi}
\label{th2.2}
\end{equation}
\label{2}
\end{theorem}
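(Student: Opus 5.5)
Write $a_n=\sum_{k=0}^n\binom{-1/3}{k}\binom{-2/3}{n-k}\binom{-1/6}{k}\binom{-5/6}{n-k}$, so that Theorem \ref{1} reads $\sum_n (3n-1)a_n/2^n=3\sqrt6/(2\pi)$. The plan follows Sun's method \cite{newtypeserie}: find a telescoping identity of the form
\[
\frac{P(n)a_n}{2^n}=\frac{\lambda(3n-1)a_n}{2^n}+\Delta_n,\qquad \Delta_n=\frac{u_{n+1}}{2^{n+1}}-\frac{u_n}{2^n},
\]
where $P$ is the cubic in \eqref{th2} or \eqref{th2.2} and $\lambda$ is a constant. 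The sequence $u_n$ should be built from a recurrence for $a_n$. Summing over $n\ge0$ then gives $\sum P(n)a_n/2^n=\lambda\cdot 3\sqrt6/(2\pi)-u_0$, provided $u_n/2^n\to0$. The targets force $\lambda=-1/4$ with $u_0=0$ in \eqref{th2}, and $\lambda=1/2$ with $u_0=0$ in \eqref{th2.2}, or some other suitable combination.

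First, derive a linear recurrence for $a_n$. Since $a_n$ is a sum of hypergeometric terms in $(n,k)$, Zeilberger's algorithm gives a recurrence with polynomial coefficients, presumably of order $2$ or $3$:
\[
\sum_{j=0}^{r}c_j(n)a_{n+j}=0.
\]
I will compute it explicitly and certify it with the WZ certificate produced by the algorithm.

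Second, use the ansatz $u_n=\sum_{j=0}^{r-1}f_j(n)a_{n+j}$, with polynomials $f_j$ of low degree (degree $2$ or less, or more if needed). Then $\Delta_n=u_{n+1}/2^{n+1}-u_n/2^n$ is a combination of $a_n,\dots,a_{n+r}/2^n$. Eliminate $a_{n+r}$ using the recurrence and require that
\[
\frac{P(n)a_n-\lambda(3n-1)a_n}{2^n}-\Delta_n
\]
vanish identically as a combination of $a_n,\dots,a_{n+r-1}$. Matching coefficients gives a linear system for the coefficients of the $f_j$ and for $\lambda$. Solving it should produce the two stated cubics. Equivalently, one may present the answer as $P(n)a_n=\lambda(3n-1)a_n+\tfrac12 u_{n+1}-u_n$ and verify it from the recurrence. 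A rational normalising factor in $u_n$ may be needed.

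Third, check the boundary terms. The initial term $u_0$ is explicit from $a_0=1$ and the first few $a_n$. For the tail, $a_n$ grows at most like $C\,\rho^n n^{c}$ with $\rho<2$; this must hold because the series in Theorem \ref{1} converges at $q=1/2$. Hence $u_n/2^n\to0$.

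The main obstacle is the second step. The telescoping ansatz may fail to close at low degree. Also, if the recurrence has order $3$, the linear system becomes large and one has to guess the right denominators in $f_j$. I would start with $r=2$ and $\deg f_j\le 2$, and enlarge the degrees if no solution exists. If that still fails, I would instead use the Cauchy-product/hypergeometric representation from the proof of Theorem \ref{1}. The generating function is a product of two ${}_2F_1$'s, so one can apply the differential operators $(q\,d/dq)^m$ to it at $q=1/2$. The sums $\sum n^m a_n/2^n$ for $m=0,\dots,3$ would then reduce, via contiguous relations, to the two constants coming from $F$ and $F'$. Eliminating the non-$1/\pi$ constant would give exactly these two cubic combinations.
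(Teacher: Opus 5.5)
Your plan is essentially the paper's proof: Zeilberger's algorithm gives an order-$2$ recurrence for $a_n/2^n$, and summing it over $n\ge0$ (equivalently, your telescoping ansatz with cubic $f_j$ and a vanishing boundary term) gives $\sum_n(36n^3-108n^2-53n-9)a_n/2^n=0$; adding $\tfrac14$ resp.\ $\tfrac12$ of this to Theorem 1 weighted by $\lambda=-\tfrac14$ resp.\ $\lambda=\tfrac12$ yields the two identities. The one flaw is your tail argument: convergence at $q=1/2$ only gives $\limsup|a_n|^{1/n}\le2$, not $\rho<2$, but the trivial bound $|a_n|\le n+1$ (since $|\binom{-x}{k}|\le1$ for $0<x\le1$) makes $u_n/2^n\to0$ immediate.
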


Using the same method as for Theorem \ref{1}, we are able to prove eight additional series of the form
\begin{equation}
\sum_{n=0}^\infty (an+b)q^n\sum_{k=0}^n\binom{-1/3}{k}\binom{-2/3}{n-k}\binom{-1/6}{k}\binom{-5/6}{n-k}=\frac{\alpha}{\pi},
\label{general form}
\end{equation}
where $a$ and $b$ are integers, $q$ is rational, and $\alpha$ is an algebraic number of degree at most $2$. They are listed at the end of the note in Table \ref{table}.

This note uses standard notation. The Pochhammer symbol is given by $(a)_n=\prod_{k=0}^{n-1}(a+k)$.
\section{Preliminary results}
To prove Theorems \ref{1} and \ref{2}, we need several intermediate results. The first lemma establishes a useful transformation.
\begin{lemma}
    Let $n\ge0$ be an integer. Then
    \begin{equation}
    108^n\sum_{k=0}^n\binom{-1/3}{k}\binom{-1/6}{k}\binom{-1/3}{n-k}\binom{-1/6}{n-k}=\binom{2n}{n}^2\binom{3n}{n}
    \label{lem1}
    \end{equation}
\end{lemma}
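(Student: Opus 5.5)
We prove \eqref{lem1} by reading the left-hand side as a Cauchy product of two identical power series and identifying that product with a known hypergeometric series. Write $\binom{-a}{k}=(-1)^k(a)_k/k!$, so
\[
\binom{-1/3}{k}\binom{-1/6}{k}=\frac{(1/3)_k(1/6)_k}{k!^2}.
\]
For $|x|<1$ set
\[
F(x)=\sum_{k\ge0}\frac{(1/3)_k(1/6)_k}{k!^2}x^k={}_2F_1\!\left(\tfrac13,\tfrac16;1;x\right).
\]
Then $\sum_n x^n\sum_{k=0}^n\binom{-1/3}{k}\binom{-1/6}{k}\binom{-1/3}{n-k}\binom{-1/6}{n-k}=F(x)^2$. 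Hence \eqref{lem1} is equivalent to the generating-function identity
\[
{}_2F_1\!\left(\tfrac13,\tfrac16;1;x\right)^2=\sum_{n\ge0}\binom{2n}{n}^2\binom{3n}{n}\left(\frac{x}{108}\right)^n={}_3F_2\!\left(\tfrac12,\tfrac13,\tfrac23;1,1;x\right),
\]
where we used $\binom{2n}{n}^2\binom{3n}{n}=108^n(1/2)_n(1/3)_n(2/3)_n/n!^3$.

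This is exactly Clausen's formula
\[
{}_2F_1(a,b;a+b+\tfrac12;x)^2={}_3F_2(2a,2b,a+b;a+b+\tfrac12,2a+2b;x)
\]
with $a=1/6$, $b=1/3$. Indeed, $a+b+\tfrac12=1$, $2a=1/3$, $2b=2/3$, $a+b=1/2$ and $2a+2b=1$, which gives ${}_3F_2(\tfrac13,\tfrac23,\tfrac12;1,1;x)$. Comparing coefficients of $x^n$ on both sides, and multiplying by $108^n$, yields \eqref{lem1}.

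The only real step is recognizing that the parameters fit Clausen's identity; the rest is bookkeeping. The main thing to check carefully is the Pochhammer rewriting of $\binom{2n}{n}^2\binom{3n}{n}$. Using $\binom{2n}{n}=4^n(1/2)_n/n!$ and $(3n)!=27^n n!(1/3)_n(2/3)_n$, we get
\[
\binom{2n}{n}^2\binom{3n}{n}=\frac{16^n(1/2)_n^2\,n!^2}{n!^4}\cdot\frac{27^n(1/3)_n(2/3)_n\,n!}{(2n)!}.
\]
Substituting $(2n)!=4^n n!(1/2)_n$ gives $108^n(1/2)_n(1/3)_n(2/3)_n/n!^3$, as required.

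If one prefers to avoid citing Clausen, the fallback is to check that both sides satisfy the same third-order recurrence, for instance via Zeilberger's algorithm applied to the convolution, and then to compare initial values.
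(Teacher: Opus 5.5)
Your proof is correct, but it takes a different route from the paper. The paper argues along the lines of your fallback. It applies Zeilberger's algorithm to the convolution $u_n$ (the left-hand side), obtains $(n+1)^3u_{n+1}-6(3n+2)(2n+1)(3n+1)u_n=0$, checks that $\binom{2n}{n}^2\binom{3n}{n}$ satisfies the same recurrence, and compares initial values. This recurrence is of order one with cubic coefficients, not of order three as you guessed.

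You instead rewrite the lemma as the generating-function identity ${}_2F_1(\tfrac16,\tfrac13;1;x)^2={}_3F_2(\tfrac12,\tfrac13,\tfrac23;1,1;x)$ and recognize it as Clausen's formula with $a=\tfrac16$, $b=\tfrac13$. Your parameter check is right. This approach needs no computer algebra (the paper does not display the Zeilberger certificate) and explains why the convolution has a closed form. It also shows that the lemma is just the coefficientwise version of the identity the proof of Theorem 1 actually uses. There one could pass directly from $(1-z)^{-1/2}\,{}_2F_1(\tfrac13,\tfrac16;1;z)^2$ to $(1-z)^{-1/2}\,{}_3F_2(\tfrac12,\tfrac13,\tfrac23;1,1;z)$ by Clausen, without the second Cauchy product. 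The paper's approach, in exchange, is mechanical and does not rely on noticing that the parameters satisfy $c=a+b+\tfrac12$.

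There is one slip in your bookkeeping. Since $\binom{3n}{n}=(3n)!/(n!\,(2n)!)$ and $(3n)!=27^n\,n!\,(\tfrac13)_n(\tfrac23)_n$, the second factor in your display should be $27^n(\tfrac13)_n(\tfrac23)_n/(2n)!$, without the extra $n!$ in the numerator. As written, your product simplifies to $108^n(\tfrac12)_n(\tfrac13)_n(\tfrac23)_n/n!^2$. The final formula with $n!^3$ that you state and use is nonetheless correct, so the argument stands.
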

\begin{proof}
    It is easy to see that \eqref{lem1} holds for $n=0$ and $n=1$.
    
Let $u_n$ denote the left-hand side of \eqref{lem1}. Using Zeilberger's algorithm on $u_n$, we obtain the recurrence relation
\[
(n+1)^3 u_{n+1} - 6(3n+2)(2n+1)(3n+1) u_n = 0.
\]
Moreover, one can easily verify that $\binom{2n}{n}^2 \binom{3n}{n}$ also satisfies this recurrence, which completes the proof of the lemma.
\end{proof}
The following lemma presents a Ramanujan-type formula due to Guillera \cite{guillera}*{Table 3}.
\begin{lemma}[J. Guillera]
We have the following Ramanujan-type series.
    \begin{equation}
\sum_{k=0}^\infty\biggl(\frac{6k}{3\sqrt3}+\frac1{3\sqrt{3}}\biggr)\frac{\bigl(\frac{1}{2}\bigr)_k\bigl(\frac{1}{3}\bigr)_k\bigl(\frac{2}{3}\bigr)_k}{2^kk!^3}=\frac{1}{\pi}
    \label{ramanujan}
    \end{equation}
\end{lemma}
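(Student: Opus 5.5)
The plan is to derive \eqref{ramanujan} from the general theory of Ramanujan-type series in Ramanujan's cubic signature $3$, rather than by a direct summation. Write $A_k=\frac{(\frac12)_k(\frac13)_k(\frac23)_k}{k!^3}$ and $F(z)=\sum_k A_k z^k={}_3F_2(\frac13,\frac12,\frac23;1,1;z)$. The identity to prove is
\[
\frac{1}{3\sqrt3}\bigl(F(\tfrac12)+6\,\theta F(\tfrac12)\bigr)=\frac1\pi,
\qquad \theta=z\frac{d}{dz}.
\]

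\textbf{Step 1: reduce to a ${}_2F_1$.} I would first apply Clausen's formula, which gives
\[
F\bigl(4x(1-x)\bigr)={}_2F_1\bigl(\tfrac13,\tfrac23;1;x\bigr)^2 .
\]
At the evaluation point we have $4x(1-x)=\frac12$, so $x=\frac{2-\sqrt2}{4}$. Differentiating the Clausen identity expresses $\theta F$ at $z=\frac12$ through $a={}_2F_1(\frac13,\frac23;1;x)$ and $a'$. Hence the left-hand side becomes a quadratic form in $a$ and $a'$ with coefficients in $\mathbb Q(\sqrt2)$.

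\textbf{Step 2: locate the singular value.} Next I would use the cubic modular parametrization of the Borweins. Set $x=c^3(q)/a^3(q)$ and $a={}_2F_1(\frac13,\frac23;1;x)=a(q)$ with $q=e^{2\pi i\tau}$. For $\tau=i\sqrt{N/3}$ one has the Legendre-type relation for the cubic theory: combining the inversion $\tau\mapsto -1/(3\tau)$, which exchanges $x$ and $1-x$, with the modular equation of degree $N$ yields
\[
\frac{1}{\pi}=\lambda_1\,a^2+\lambda_2\,x(1-x)\,a\,\frac{da}{dx}
\]
with explicitly computable algebraic $\lambda_1,\lambda_2$, as in Chan--Liaw--Tan. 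I would identify which $N$ gives $x(1-x)=\frac18$; since $\sqrt2$ appears, I expect $N=2$, so $\tau=i\sqrt{2/3}$. I would confirm this numerically from $x=c^3/a^3$ via the $q$-series of $a(q)$ and $c(q)$.

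\textbf{Step 3: match coefficients.} Finally I would substitute into the formula of Step 1 and check that the resulting constants are exactly $\frac{1}{3\sqrt3}$ and $\frac{6}{3\sqrt3}$. The $\sqrt2$'s should cancel, because $z=\frac12$ is rational.

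\textbf{Main obstacle.} The hard step is computing the constants $\lambda_1,\lambda_2$ at $N=2$. This needs the cubic singular modulus together with the value of the multiplier's derivative, i.e.\ the analogue of Ramanujan's $\alpha_N$ function. I would obtain these from the degree-$2$ cubic modular equation relating $x(q)$ and $x(q^2)$, differentiated at the fixed point of $\tau\mapsto -2/(3\tau)$.

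As a fallback, I would try Guillera's WZ method. This means searching, via Gosper's algorithm, for a WZ pair $(F,G)$ with
\[
F(n,k)=\frac{(\frac12)_n(\frac13+k)_n(\frac23+k)_n}{(1)_n(1+k)_n^2\,2^n}\,R(n,k),
\]
where $R(n,k)$ is a polynomial normalized so that $\sum_n F(n,0)$ is the given series. Carlson's theorem then shows $\sum_n F(n,k)$ is constant in $k$, and the $k\to\infty$ limit gives the constant; a numerical check to many digits would guard against sign or normalization slips.
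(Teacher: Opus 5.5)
The paper gives no proof of this lemma. It simply quotes it from Guillera's Table 3, so your argument is an independent derivation, and it does close.

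Step 1 is right. Write $f={}_2F_1(\frac13,\frac23;1;\cdot)$ for your $a$. Clausen's formula with $(a,b)=(\frac16,\frac13)$, combined with the quadratic transformation, gives $F(4x(1-x))=f(x)^2$ for $0\le x\le\frac12$. The root $x_2=\frac{2-\sqrt2}{4}$ lies on that branch, and $\theta F(\frac12)=\frac{1}{2\sqrt2}\,f(x_2)f'(x_2)$.

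You do not need numerics to identify $N=2$. At the point where $\beta=x(q^2)$ equals $1-\alpha$, the degree-$2$ equation $(\alpha\beta)^{1/3}+((1-\alpha)(1-\beta))^{1/3}=1$ forces $x(1-x)=\frac18$ exactly.

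Your ``main obstacle'' is also light here. The degree-$2$ curve is parametrized by $\alpha=\frac{p(3+p)^2}{2(1+p)^3}$, $\beta=\frac{p^2(3+p)}{4}$, with multiplier $m=f(\alpha)/f(\beta)=1+p$. The point with $\beta=1-\alpha$ is $p=\sqrt2-1$, where $m=\sqrt2$ and $\beta=x_2$. There one finds $d\beta/d\alpha=1$ and $dm/d\alpha=\frac43$. Differentiate $f(\alpha)=m\,f(\beta)$ and substitute into the Wronskian $f(x)f'(1-x)+f(1-x)f'(x)=\frac{\sqrt3}{2\pi x(1-x)}$ at $x=x_2$. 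This gives
\[
\frac1\pi=\frac{1}{3\sqrt3}\,f(x_2)^2+\frac{1}{\sqrt6}\,f(x_2)f'(x_2),
\]
which is exactly $\frac{1}{3\sqrt3}\bigl(F(\frac12)+6\,\theta F(\frac12)\bigr)$. So $\lambda_1=\frac{1}{3\sqrt3}$ and $\lambda_2=\frac{4\sqrt6}{3}$, and the WZ fallback is unnecessary.

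As for what each approach buys: the paper's citation costs one line but leaves the note dependent on an external table. Your route costs about a page of computation, but it makes the lemma self-contained. It also explains why $z=\frac12$ appears: it is the degree-$2$ singular value in signature $3$. The same mechanism at other singular values underlies the remaining entries of the paper's Table.
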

The other identities given by Guillera \cite{guillera}*{Table 3} are used to prove the formulas in Table \ref{table}.
\section{Proof of Theorem \ref{1}}

We can now prove Theorem \ref{1}.

\begin{proof}[Proof of Theorem \ref{1}]
Let $z\in\C$ such that $|z|<1$ and let $P$ be a function defined for all $z$ by
    \begin{equation}
    P(z)=\sum_{n=0}^\infty z^n\sum_{k=0}^n\binom{-1/3}{k}\binom{-2/3}{n-k}\binom{-1/6}{k}\binom{-5/6}{n-k}.
    \label{P(z)}
    \end{equation}
    By the Cauchy product of two series, we obtain a new expression for $P$.
    \[P(z)=\Biggr(\sum_{k=0}^\infty z^k\binom{-1/3}{k}\binom{-1/6}{k}\Biggl)\Biggl(\sum_{k=0}^\infty z^k\binom{-2/3}{k}\binom{-5/6}{k}\Biggr)\]
    The series $P$ can thus be expressed as a product of two hypergeometric functions ${}_2F_1$. Using Euler's transformation, we obtain
    \begin{align*}
        P(z)&={}_2F_1\biggl(\frac13,\frac16;1;z\biggr){}_2F_1\biggl(\frac23,\frac56;1;z\biggr)\\
        &=(1-z)^{-\frac12}{}_2F_1\biggl(\frac13,\frac16;1;z\biggr)^2.
    \end{align*}
    Then, upon applying another Cauchy product, it follows that
\[
P(z) = (1-z)^{-\frac12} \sum_{n=0}^\infty z^n \sum_{k=0}^n \binom{-1/3}{k} \binom{-1/6}{k} \binom{-1/3}{n-k} \binom{-1/6}{n-k}.
\]
Using identity \eqref{lem1} from Lemma 1, this becomes
    \begin{equation}
    P(z)=(1-z)^{-\frac12}\sum_{n=0}^\infty \biggl(\frac{z}{108}\biggr)^n\binom{2n}{n}^2\binom{3n}{n}.
    \label{simp. P(z)}
    \end{equation}
    Let us apply the operator
    \[-1+3z\frac{d}{dz}\Bigg|_{z=\frac12}\]
    to both sides of \eqref{P(z)} to obtain the relation
\[
\frac{3}{2} P'\biggl(\frac12\biggr) - P\biggl(\frac12\biggr) = \sum_{n=0}^\infty \frac{3n-1}{2^n} \sum_{k=0}^n \binom{-1/3}{k} \binom{-2/3}{n-k} \binom{-1/6}{k} \binom{-5/6}{n-k},
\]
where the $'$ stands for the derivative with respect to $z$. Hence, using equation \eqref{simp. P(z)} in the above relation, we have
    \begin{align*}
        \sum_{n=0}^\infty \frac{3n-1}{2^n}\sum_{k=0}^n&\binom{-1/3}{k}\binom{-2/3}{n-k}\binom{-1/6}{k}\binom{-5/6}{n-k}\\
        &=\frac{\sqrt{2}}{2}\sum_{n=0}^\infty \frac{1}{216^n}\binom{2n}{n}^2\binom{3n}{n}+3\sqrt2\sum_{n=0}^\infty \frac{n}{216^n}\binom{2n}{n}^2\binom{3n}{n}\\
        &=\frac{\sqrt{2}}{2}\sum_{k=0}^{\infty}(6k+1)\frac{\bigl(\frac{1}{2}\bigr)_k\bigl(\frac{1}{3}\bigr)_k\bigl(\frac{2}{3}\bigr)_k}{2^kk!^3}.
        \label{2.3}
    \end{align*}
Identity \eqref{ramanujan} from Lemma 2 completes the proof.
 \begin{align*}
     \sum_{n=0}^\infty \frac{3n-1}{2^n}\sum_{k=0}^n\binom{-1/3}{k}\binom{-2/3}{n-k}\binom{-1/6}{k}\binom{-5/6}{n-k}&=\frac{\sqrt{2}}{2}\times\frac{3\sqrt{3}}{\pi}\\&=\frac{3\sqrt{6}}{2\pi}
 \end{align*}
 Theorem \ref{1} is proved.
\end{proof}
\section{Proof of Theorem \ref{2}}
The proof of these results is enabled by that of Theorem \ref{1}. We then use the method given by Sun \cite{newtypeserie} to produce series of \textit{type S}.
\begin{proof}[Proof of Theorem \ref{2}]
Let $n\in\N$. Set
\[
a_n = \frac{1}{2^n} \sum_{k=0}^n \binom{-1/3}{k} \binom{-2/3}{n-k} \binom{-1/6}{k} \binom{-5/6}{n-k}.
\]
Using Zeilberger's algorithm on $a_n$, we obtain the following recurrence relation.
\[144(n+2)^3a_{n+2}-4(2n+3)(18n^2+54n+47)a_{n+1}+(n+1)(6n+5)(6n+7)a_n=0\]
Hence,
\begin{align*}
    0&=\sum_{n=0}^\infty(n+1)(6n+5)(6n+7)a_n-4\sum_{n=0}^\infty(2n+3)(18n^2+54n+47)a_{n+1}\\&\quad+144\sum_{n=0}^\infty(n+2)^3a_{n+2}\\
    &=\sum_{k=0}^\infty(k+1)(6k+5)(6k+7)a_k-4\sum_{k=1}^\infty(2k+1)(18k^2+18k+11)a_k\\&\quad+144\sum_{k=2}^\infty k^3a_k\\
    &=\sum_{k=0}^\infty\bigl((k+1)(6k+5)(6k+7)-4(2k+1)(18k^2+18k+11)+144k^3 \bigr)a_k\\&\quad+44a_0-144a_1\\
    &=\sum_{k=0}^\infty(36k^3-108k^2-53k-9)a_k.
\end{align*}
By subtracting or adding this to \eqref{th1}, we obtain the identities \eqref{th2} and \eqref{th2.2} as desired. Theorem \ref{2} is proved.
\end{proof}
\section{Concluding remarks}
The method we used to prove Theorem 2 can be applied to the series in Table \ref{table} to find other formulas for $1/\pi$.

Sun \cite{cong-pi-sun} noted that his conjectural identity \cite{cong-pi-sun}*{(4.22)} would imply the following formula.
\begin{equation}
    \sum_{n=0}^\infty \frac{16854n+985}{(-250000)^n}\sum_{k=0}^n \binom{-1/3}{k} \binom{-2/3}{n-k} \binom{-1/6}{k} \binom{-5/6}{n-k}=\frac{4500000}{89\sqrt{267}\pi}
    \label{sunerror}
\end{equation}
However, this statement is incorrect. Indeed, we correct the left-hand side of \eqref{sunerror} to 
\[\sum_{n=0}^\infty \frac{16854n+985}{(-250000)^n}\sum_{k=0}^n \binom{-1/3}{k} \binom{-2/3}{n-k} \binom{-1/6}{k} \binom{-5/6}{n-k}=\frac{750000\sqrt{267}}{7921\pi};\]
see Table \ref{table}. 
\section*{Acknowledgments}
The author thanks Prof. Ji-Cai Liu for his valuable advice and encouragement.
\bibliographystyle{amsplain}
\bibliography{bibly}
\setlength{\extrarowheight}{0.3cm}
\begin{table}[ht!]
\centering
\begin{tabular}{|c c c c|c c c c|} 
 \hline
$a$ & $b$ & $q$ & $\alpha$ & $a$ & $b$ & $q$ & $\alpha$\\ [0.5ex] 
 \hline\hline
 $3$ & $-1$ & $\frac{1}{2}$ & $\frac{3\sqrt{6}}{2}$ & $18150$ & $1433$ & $-\frac{1}{3024}$ & $2592\sqrt3$\\ 
 $50$ & $19$ & $-\frac{9}{16}$ & $\frac{32\sqrt3}{3}$ & $16854$ & $985$ & $-\frac{1}{500^2}$ & $\frac{750000\sqrt{267}}{7921}$ \\
 $17$ & $1$ & $-\frac{1}{16}$ & $\frac{16\sqrt{51}}{17}$ & $75$ & $7$ & $\frac{2}{27}$ & $\frac{81\sqrt3}4$\\
 $162$ & $19$ & $-\frac{1}{80}$ & $32\sqrt3$ & $363$ & $38$ & $\frac{4}{125}$ & $\frac{17\sqrt{15}}{2}$\\
 $150$ & $13$ & $-\frac{1}{1024}$ & $\frac{6144\sqrt{123}}{1681}$ &  &  &  &  \\ [1ex] 
 \hline
\end{tabular}
\caption{Other series of the form \eqref{general form}.}
\label{table}
\end{table}
\end{document}